  \newtheorem{theorem}{Theorem}[section]
  \newtheorem{corollary}[theorem]{Corollary}
  \newtheorem{proposition}[theorem]{Proposition}
  \newtheorem{lemma}[theorem]{Lemma}
  \newtheorem{question}[theorem]{Question}
  \newtheorem{problem}[theorem]{Problem}
  \theoremstyle{definition}
  \numberwithin{equation}{section}
  \title[Hereditarily Normal Wijsman Hyperspaces Are Metrizable]
  {Hereditarily Normal Wijsman Hyperspaces\\ Are Metrizable$^\ast$}
  \author[J. Cao]{Jiling Cao}
  \address{School of Computing and Mathematical Sciences,
  Auckland University of Technology, Private Bag 92006, Auckland
  1142, New Zealand}
  \email{jiling.cao@aut.ac.nz}
  \author[H.~J.~K. Junnila]{Heikki J. K. Junnila}
  \address{Department of Mathematics and Statistics, The
  University of Helsinki, P.~O. Box 68, FI-00014,  Helsinki,
  Finland}
  \email{heikki.junnila@helsinki.fi}
  \thanks{\hspace{-1.66em} 2010 \emph{Mathematics Subject
  Classification.}
  Primary 54E35; Secondary 54B20, 54D15.}
  \thanks{\noindent \emph{Keywords}. Embedding, hereditarily
  normal, hyperspace, metrizable, normal, Wijsman topology.}
  \thanks{\noindent $^\ast$This paper was initially and
  partially written when the first author was in a Research
  and Study Leave from July to December 2009, and visited
  the second author in August 2009. The paper was eventually
  completed when the two authors met and discussed at the
  International Conference on Topology and the Related Fields,
  held at Nanjing, China, 22-25 September 2012. The two authors
  would like to thank the School of Computing and Mathematical
  Sciences at the Auckland University of Technology, and the
  Department of Mathematics and Statistics at the University
  of Helsinki for their financial supports.}
  \date{}
  \dedicatory{Dedicated to Professor Mitrofan Choban
  and Professor Stoyan Nedev\\
  for their 70 birthday.}
\begin{document}

  \begin{abstract}
  In this paper, we study normality and metrizability of Wijsman hyperspaces.
  We show that every hereditarily normal Wijsman hyperspace is metrizable.
  This provides a partially answer to a problem of Di Maio and Meccariello
  in 1998.
  \end{abstract}

  \maketitle

  \section{Introduction}

  Throughout this paper, $2^X$ denotes the family of all nonempty closed
  subsets of a given topological space $X$. For a metric space $(X,d)$,
  let $d(x,A)=\inf\{d(x,a): a \in A\}$
  denote the distance between a point $x \in X$ and a nonempty subset
  $A$ of $(X,d)$, and
  $S_d(A,\varepsilon)=\{ x \in X: d(x,A)<\varepsilon\}$.
  A net $\{A_\alpha: \alpha \in D\}$ in
  $2^X$ is said to be \emph{Wijsman convergent to} some $A$ in $2^X$ if $d(x,A_\alpha) \to d(x, A)$ for every $x\in X$. The Wijsman topology
  on $2^X$ induced by $d$, denoted by $\tau_{w(d)}$, is the weakest
  topology such that for every $x\in X$, the distance functional $d(x,
  \cdot)$ is continuous. To see the structure of this topology, for
  any $E \subseteq X$, let
  $E^- =\{A \in 2^X: A \cap E\ne \emptyset \}$.
  It can be seen easily that the Wijsman topology on $2^X$ induced by
  $d$ has the family
  \[
  \left\{U^-: U \mbox{ is open in } X \right\}\cup
  \left\{\{A \in 2^X: d(x,A) >\varepsilon \} : x\in X,
  \varepsilon >0 \right\}
  \]
  as a subbase, refer to \cite{beer:93}. Moreover, for a finite
  subset $E\subseteq X$ and $\varepsilon >0$, let
  \[
  {\mathcal N}_{A,E,\varepsilon}=\left\{B\in 2^X: |d(x,A)-d(x,B)|
  <\varepsilon \mbox{ for all } x\in E\right\}.
  \]
  Then for any $A \in 2^X$, the collection
  \[
  \{{\mathcal N}_{A,E,\varepsilon}: E\subseteq X \mbox{ is
  finite and } \varepsilon >0\}
  \]
  forms a neighborhood base of $A$ in $\tau_{w(d)}$. This type of
  convergence was first introduced
  by Wijsman in \cite{wijsman:66} for sequences of closed convex sets
  in Euclidean space $\mathbb R^n$, when he studied optimum properties
  of the sequential probability ratio test. It was
  \cite{lechicki-levi:87} where Wijsman convergence was considered
  in the general framework of a metric space, and the metrizability
  of the Wijsman topology of a separable metric space was
  established. Since then, there has been a considerable effort
  to explore various topological properties of Wijsman hyperspaces.
  For example, Beer \cite{beer:91} and Costantini
  \cite{costantini:95} studied Polishness of Wijsman hyperspaces,
  Cao and Tomita \cite{cao-tomita:10} as well as Zsilinszky
  \cite{zsilinszky:07} investigated Baireness of Wijsman hyperspaces,
  Cao and Junnila \cite{cao-junnila:10} studied Amsterdam properties
  of Wijsman spaces. However, Wijsman hyperspaces are far to be
  completely understood, and still there are many problems concerning
  fundamental properties of these objects unsolved. This motivates
  the authors to continue their study of Wijsman hyperspaces in the
  present paper.

  \medskip
  Note that all Wijsman topologies are Tychonoff, since they are
  weak topologies. In a more recent paper, Cao, Junnila and Moors
  \cite{cao-junnila-moors:12} showed that Wijsman hyperspaces are
  universal Tychonoff spaces in the sense that every Tychonoff
  space is embeddable as a closed subspace in the Wijsman hyperspace
  of a complete metric space which is locally $\mathbb R$. Thus,
  one of the fundamental problems is to determine when a Wijsman
  hyperspace is normal. The problem was first mentioned
  by Di Maio and Meccariello in \cite{diMM:98}, where it was
  asked whether the normality of a Wijsman hyperspace is equivalent
  to its metrizability. A partial solution to this problem, which
  asserts that the answer is ``yes" when the base space of a Wijsman
  hyperspace is a normed linear space, was recently observed by Hol\'{a}
  and Novotn\'{y} in \cite{hola-novotny:2012}. The main purpose of this
  paper is to give another partial answer to this problem. By using
  techniques similar to those of Keesling in \cite{keesling:70a},
  we are able to establish that a Wijsman hyperspace is hereditarily
  normal if and only if it is metrizable.

  \medskip
  The rest of this paper is organized as follows. In Section
  \ref{sec:overview}, an overview on the normality and
  metrizability of basic types of hyperspaces is provided. The
  main result and its proof are given in Section \ref{sec:hnormal}.
  Our terminology and notation are standard. For undefined terms,
  refer to \cite{beer:93}, \cite{burke:84} or \cite{en:89}.

  \section{Normality and metrizability of hyperspaces}
  \label{sec:overview}

  It has been an interesting and challenging problem in general
  topology to characterize normality of the hyperspace of a
  topological space. In
  1955, Ivanova \cite{ivanova:55} showed that $2^{\mathbb N}$
  with the Vietoris topology is not normal, where $\mathbb N$
  is equipped with the discrete topology. Continuing in this
  direction, Keesling \cite{keesling:70a} proved that under
  the CH (Continuum Hypothesis), for a Tychonoff space $X$,
  $(2^X, \tau_V)$ is normal if and only if $(2^X, \tau_V)$
  is compact (and thus if and only if $X$ is compact), where
  $\tau_V$ denotes the Vietoris topology on $2^X$. In
  addition, he also showed in \cite{keesling:70b} that for a
  regular $T_1$ space $X$, a number of covering properties
  of $(2^X,\tau_V)$ (including Lindel\"ofness, paracompactness,
  metacompactness, and meta-Lindel\"ofness) are equivalent to
  compactness of $(2^X,\tau_V)$. Finally, Veli\v{c}ko
  \cite{velicko:75} further showed that Keesling's result on
  normality also holds without the CH. This completely solved
  the normality problem of Vietoris hyperspaces. The normality
  problem of Fell hyperspaces was settled by Hol\'a, Levi and
  Pelant in \cite{hola:99}, where they showed that
  $(2^X, \tau_F)$ is normal if and only if $(2^X,\tau_F)$
  is Lindel\"of, if and only if $X$ is locally compact and
  Lindel\"of, here $\tau_F$ denotes the Fell topology on $2^X$.
  Since in general the Wijsman topology induced by a metric is
  coarser than the Vietoris topology but finer than the Fell
  topology induced by the same metric, the following natural
  question arises.

  \begin{problem} \label{prob:normal}
  Let $(X,d)$ be a metric space. When is the Wijsman hyperspace
  $\left(2^X, \tau_{w(d)}\right)$ a normal space?
  \end{problem}

  Let us temporarily put the normality problem of Wijsman
  hyperspaces aside. Instead, let us recall when a hyperspace
  is metrizable. A classical result claims that for a $T_1$
  space $X$, $(2^X,\tau_V)$ is metrizable if and only if $X$
  is compact and metrizable, refer to \cite[p.298]{en:89}.
  A corresponding result for the Fell topology states that,
  for a Hausforff space $X$, $(2^X, \tau_F)$ is metrizable
  if and only if $X$ is locally compact and second countable
  (and thus Lindel\"of), refer to \cite{flach:64}. For
  Wijsman hyperspaces, we have the following classical result.

  \begin{theorem}[\cite{lechicki-levi:87}] \label{thm:wijmetric}
  Let $(X, d)$ be a metric space. Then $\left(2^X, \tau_{w(d)}
  \right)$ is metrizable  if and only if $(X,d)$ is separable.
  \end{theorem}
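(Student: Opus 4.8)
The plan is to prove both implications directly from the neighborhood description $\mathcal{N}_{A,E,\varepsilon}$ recorded in the introduction, treating the two directions by quite different methods.

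For the forward (easier) direction, suppose $(X,d)$ is separable and fix a countable dense set $D=\{x_n:n\in\N\}$. I would consider the map $\Phi\colon 2^X\to\R^{\N}$ defined by $\Phi(A)=(d(x_n,A))_{n\in\N}$. Each coordinate functional $A\mapsto d(x_n,A)$ is $\tau_{w(d)}$-continuous by definition, so $\Phi$ is continuous. The key observation is that the Lipschitz estimate $|d(x,A)-d(x_n,A)|\le d(x,x_n)$, together with the density of $D$, shows that every generating functional $d(x,\cdot)$ with $x\in X$ is a uniform limit of the countably many functionals $d(x_n,\cdot)$; hence $\tau_{w(d)}$ coincides with the weak topology these countably many functionals generate, that is, with the topology $\Phi$ pulls back from the product. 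Injectivity of $\Phi$ follows because, whenever $a\in A\setminus B$, one has $d(a,B)>0$ and some $x_n$ close to $a$ separates the two distance values. Thus $\Phi$ is an embedding of $(2^X,\tau_{w(d)})$ into the metrizable space $\R^{\N}$, and metrizability passes to subspaces.

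For the converse I would argue contrapositively: assuming $(X,d)$ is not separable, I will exhibit a single point of $2^X$ with uncountable character, so that $(2^X,\tau_{w(d)})$ is not first countable and \emph{a fortiori} not metrizable. Non-separability of a metric space yields, via a maximal separated set obtained from Zorn's lemma, some $\varepsilon>0$ and an uncountable $\varepsilon$-separated set $S=\{s_\xi:\xi<\omega_1\}$; being $\varepsilon$-separated, $S$ is closed and discrete, so $S\in 2^X$ and every subset of $S$ lies in $2^X$ as well. The point $S$ is the candidate of uncountable character. Given any countable family of basic neighborhoods of $S$, each contains some $\mathcal{N}_{S,E_n,\delta_n}$, and I set $E=\bigcup_n E_n$, a countable subset of $X$.

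The crux of this direction is the following counting observation: for a fixed $x\in X$, deleting a single point from $S$ changes the distance, namely $d(x,S\setminus\{s_\xi\})>d(x,S)$, only when $s_\xi$ is the strictly unique nearest point of $S$ to $x$, and there is at most one such $\xi$. Since $E$ is countable, only countably many indices $\xi$ are detected by $E$ in this way, so I can pick $\xi_0$ with $d(x,S\setminus\{s_{\xi_0}\})=d(x,S)$ for every $x\in E$. Then $B:=S\setminus\{s_{\xi_0}\}$ lies in every $\mathcal{N}_{S,E_n,\delta_n}$, hence in every member of the countable family, yet $d(s_{\xi_0},B)\ge\varepsilon$ shows $B\notin \mathcal{N}_{S,\{s_{\xi_0}\},\varepsilon/2}$. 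Therefore no member of the countable family sits inside the neighborhood $\mathcal{N}_{S,\{s_{\xi_0}\},\varepsilon/2}$, so the family is not a neighborhood base at $S$. I expect this counting step, pinning down that only countably many one-point deletions are visible to a countable set of test points, to be the main obstacle, the rest being routine verifications against the definition of $\tau_{w(d)}$.
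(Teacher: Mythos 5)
Your proof is correct in both directions. The forward direction is essentially the paper's own argument in disguise: the paper exhibits the compatible metric $\varrho_d(A,B)=\sum_{n}2^{-n}\bigl(|d(x_n,A)-d(x_n,B)|\wedge 1\bigr)$, which is exactly the metric your embedding $\Phi\colon A\mapsto (d(x_n,A))_{n\in\mathbb{N}}$ pulls back from the product metric on $\mathbb{R}^{\mathbb{N}}$; your uniform-approximation step ($|d(x,\cdot)-d(x_n,\cdot)|\le d(x,x_n)$ uniformly on $2^X$, so the countably many coordinate functionals already generate $\tau_{w(d)}$) is precisely the content of the paper's ``it can be checked.'' The converse is where you genuinely diverge. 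The paper does not prove it inside Theorem 2.2 at all --- it cites Lechicki--Levi (who characterize metrizability via $\{X\}$ being a $G_\delta$-point), and its own self-contained route to non-metrizability for non-separable $(X,d)$ is Proposition 3.2: an embedding of $(\omega_1+1)^n$, built from an uncountable $\varepsilon$-discrete set via increasing transfinite chains of initial segments. You instead compute the character at a single point: taking $S$ an uncountable $\varepsilon$-separated set, your counting lemma --- for each test point $x$ at most one index $\xi$ satisfies $d(x,s_\xi)<\inf_{\eta\ne\xi}d(x,s_\eta)$, so a countable test set $E$ ``sees'' only countably many one-point deletions --- produces $B=S\setminus\{s_{\xi_0}\}$ lying in all prescribed basic neighborhoods while $d(s_{\xi_0},B)\ge\varepsilon$ keeps it out of $\mathcal{N}_{S,\{s_{\xi_0}\},\varepsilon/2}$, whence $\chi(S,2^X)>\aleph_0$. (All the side conditions check out: one-point-deleted subsets of an $\varepsilon$-separated set are nonempty and closed, at most one deletion is detectable per $x$ even when the infimum over $S$ is not attained, and $B\notin\mathcal{N}_{S,\{s_{\xi_0}\},\varepsilon/2}$ shows no member of the given countable family fits inside that neighborhood.) Your argument is more elementary and self-contained --- it needs no ordinal topology and localizes the failure of metrizability as uncountable character at one point --- while the paper's embedding is heavier but buys strictly more: copies of $(\omega_1+1)^n$ kill countable tightness, Fr\'echetness and sequentiality as well, and the refinement in Proposition 3.6 (a closed copy of $\omega_1\times(\omega_1+1)$ in $2^X\setminus\{X\}$) is what the hereditary-normality main theorem actually requires, none of which follows from a character computation alone.
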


  Indeed, if $\{x_n: n \in \mathbb N \}$ is any dense subset
  of $X$, then it can be checked that
  \[
  \varrho_d(A,B) = \sum_{n=1}^\infty \frac{|d(x_n,A)-d(x_n,B)|
  \wedge 1}{2^n}
  \]
  defines a metric on $2^X$ that is compatible with $\tau_{w(d)}$.
  As a consequence of this result, $\left(2^X,\tau_{w(d)}
  \right)$ is Lindel\"of if and only if it is metrizable. Note that
  for any metric space $(X,d)$, we have that $\left(2^X, \tau_{w(d)}
  \right)$ is countably compact if and only if $\left(2^X,\tau_{w(d)}
  \right)$ is compact. Theorem 3.5 in \cite{lechicki-levi:87} also
  claims that $\left(2^X, \tau_{w(d)}\right)$ is metrizable if and
  only if $\{X \}$ is a $G_\delta$-point of $\left(2^X, \tau_{w(d)}
  \right)$. As a consequence, the metrizability of $\left(2^X,\tau_{w(d)}
  \right)$ is equivalent to a large number of generalized metric
  properties. For example, $\left(2^X, \tau_{w(d)} \right)$ is
  metrizable if and only if it has a $G_\delta$-diagonal or it is
  semi-stratifiable.

  \medskip
  In the light of the work of Keesling in \cite{keesling:70b}
  and Veli\v{c}ko in \cite{velicko:75} on the normality of
  Vietoris hyperspaces as well as the work of Hol\'a et al.
  on the normality of Fell hyperspaces, one may wonder
  whether the paracompactness and the metrizability of Wijsman
  hyperspaces are equivalent. These facts motivated Di Maio
  and Meccariello to pose the following natural problem in
  1998, which also brings the normality and the metrizability of
  Wijsman hyperspaces together.

  \begin{problem}[\cite{diMM:98}]\label{prob:dimm}
  It is known that if $(X,d)$ is a separable metric space,
  then $\left(2^X,\tau_{w(d)}\right)$ is metrizable and so
  paracompact and normal. Is the opposite true? Is $\left(
  2^X,\tau_{w(d)}\right)$ normal if, and only if, $\left(2^X,
  \tau_{w(d)}\right)$ is metrizable?
  \end{problem}

  Note that neither Problem \ref{prob:normal} nor Problem \ref{prob:dimm}
  is completely solved. An affirmative solution to Problem
  \ref{prob:dimm} would also solve Problem \ref{prob:normal}.
  The following partial answer to Problem \ref{prob:dimm} was recently
  established by Hol\'{a} and Novotn\'{y} in \cite{hola-novotny:2012}.

  \begin{theorem}[\cite{hola-novotny:2012}] \label{thm:hola}
  Let $(X, \|\cdot\|)$ be a normed linear space, and let $d$ be the
  metric on $X$ induced by the norm $\|\cdot \|$. Then $\left(2^X,
  \tau_{w(d)}\right)$ is normal if and only if it is metrizable.
  \end{theorem}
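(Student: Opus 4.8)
The plan is to deduce the theorem from Theorem~\ref{thm:wijmetric}, which reduces metrizability of $(2^X,\tau_{w(d)})$ to separability of $(X,d)$. Since every metrizable space is normal, only one implication carries content, and I would prove its contrapositive: if the normed space $(X,\|\cdot\|)$ is \emph{nonseparable}, then $(2^X,\tau_{w(d)})$ is \emph{not} normal. (Note that the corresponding statement for arbitrary metric spaces is precisely the open Problem~\ref{prob:dimm}, so the linear structure of $X$ must enter in an essential way.)

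First I would extract from nonseparability an uncountable uniformly discrete set. A standard argument---for each $n$ take a maximal $(1/n)$-separated subset; were all of these countable, their union would be a countable dense set---produces $\varepsilon>0$ and points $\{z_\alpha:\alpha<\omega_1\}$ with $\|z_\alpha-z_\beta\|\ge\varepsilon$ for $\alpha\ne\beta$. Fix $\delta$ with $0<\delta\ll\varepsilon$. Using the linear structure I would set up one pairwise-disjoint ``slot'' $B(z_\alpha,\delta)$ per index, a common ``skeleton'' $Z=\{z_\alpha:\alpha<\omega_1\}$ (which is closed and uniformly discrete, hence a legitimate member of $2^X$), and, inside each slot, a small configuration encoding a coordinate value---for instance the single point $z_\alpha+t\,u_\alpha$ for a fixed unit vector $u_\alpha$ and a parameter $t$. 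This yields a family of closed sets $A_f=Z\cup\{\text{configuration for }f(\alpha):\alpha<\omega_1\}$ indexed by functions $f$ on $\omega_1$. The geometric point of the construction is \emph{locality}: for any probe point $w$ lying in the slot $B(z_\alpha,\delta)$, every configuration point of every other index sits at distance at least $\varepsilon-2\delta$, so $d(w,A_f)$ is governed by the single coordinate $f(\alpha)$.

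With this coordinate structure in place, the goal is to exhibit two disjoint closed subsets $\mathcal H,\mathcal K$ of $\{A_f\}$ that cannot be separated by disjoint open sets, in analogy with the classical non-normality of products such as $\R^{\omega_1}$. The decisive structural feature I would exploit is that a basic Wijsman neighborhood ${\mathcal N}_{A,E,\varepsilon'}$ is determined by a \emph{finite} probe set $E$; hence any pair of open sets allegedly separating $\mathcal H$ from $\mathcal K$ controls only finitely many coordinates at each of its points. Feeding this finiteness into a pressing-down / $\Delta$-system argument on $\omega_1$ should force the two candidate open sets to meet, exactly as in the product case.

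I expect the main obstacle to lie not in the combinatorics but in the analysis of the distance functionals at probe points $w$ that are \emph{far} from the whole skeleton. There $d(w,A_f)$ is a global infimum over the infinitely many configuration points, and these can be nearly equidistant from $w$ (already for an orthonormal family in Hilbert space one has $\|w-z_\alpha\|\to(\|w\|^2+1)^{1/2}$ for all but finitely many $\alpha$), so $d(w,A_f)$ may depend on infinitely many coordinates at once. The real work is to show that such long-range probes perturb the relevant distances by at most $O(\delta)$ and therefore cannot be used to separate $\mathcal H$ from $\mathcal K$; this is also the step where I would verify that each $A_f$ is closed in $X$ and that the whole family is closed in $(2^X,\tau_{w(d)})$. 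Controlling these estimates is precisely where the homogeneity and convexity of the norm, together with a sufficiently small choice of $\delta$ relative to $\varepsilon$, are indispensable.
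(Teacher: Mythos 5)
Your outer frame is the same as the paper's: by Theorem~\ref{thm:wijmetric} metrizability of $\left(2^X,\tau_{w(d)}\right)$ is equivalent to separability of $(X,d)$, so the whole content is ``nonseparable $\Rightarrow$ non-normal''. But at that point the paper does not construct anything. It observes that a nonseparable normed space is infinite-dimensional, so by Riesz's lemma no point has a compact neighbourhood, i.e.\ ${\rm nlc}(X)=X$ is nonseparable; Theorem~\ref{thm:chaber} of Chaber and Pol then gives a \emph{closed} copy of ${\mathbb N}^{\omega_1}$ inside $\left(2^X,\tau_{w(d)}\right)$, and since ${\mathbb N}^{\omega_1}$ is non-normal (Stone) and normality is inherited by closed subspaces, the hyperspace is non-normal. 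What you propose is, in effect, to reprove the Chaber--Pol embedding by hand, and your sketch of that construction has a genuine defect beyond the steps you openly defer.

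The defect is the slot design. With one configuration point $z_\alpha+t\,u_\alpha$ per slot, the coordinate parameter $t$ ranges in the compact segment $[0,\delta]$, and you cannot win either way: if the admissible values of $t$ are infinite they accumulate, and then the family $\{A_f\}$ is \emph{not closed} in $\left(2^X,\tau_{w(d)}\right)$ --- for instance, along a net $f_k$ with $t_{f_k(\alpha)}\to 0$ pointwise one checks $d(x,A_{f_k})\to d(x,Z)$ for every $x\in X$, so the skeleton $Z$ (and, similarly, any ``mixed'' limit set) lies in the Wijsman closure of the family without belonging to it. Since normality is not hereditary, non-normality of a non-closed subspace tells you nothing about normality of $2^X$; closedness of the embedded copy is the crux of the whole strategy, not a verification to be deferred. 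If instead you keep only finitely many values of $t$, your coordinate space is finite and the relevant product is compact, hence normal, and no contradiction can emerge. What the argument actually requires is an \emph{infinite, uniformly discrete} configuration inside each slot, so that the coordinate spaces are infinite closed discrete sets and the image of ${\mathbb N}^{\omega_1}$ is closed; this is exactly where infinite-dimensionality (equivalently, the failure of local compactness, via Riesz's lemma) must be invoked, and your proposal never uses it --- even though you correctly sensed, via the open Problem~\ref{prob:dimm}, that the linear structure must enter essentially. Finally, the remaining load-bearing steps (closedness of each $A_f$ and of the whole family, the long-range probe estimates, and the pressing-down separation argument) are stated as expectations rather than proved, so even with a corrected slot design the proposal is a plan for Chaber--Pol's theorem rather than a proof of it.
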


  Given a topological space $X$, define ${\rm nlc}(X)$ by
  \[
  {\rm nlc}(X) = \{x \in X:\ x \mbox{ has no compact neighbourhood
  in } X\}.
  \]
  The following result was established by Chaber and Pol in
  \cite{chaber-pol:02}.

  \begin{theorem}[\cite{chaber-pol:02}] \label{thm:chaber}
  If $(X,d)$ is a metric space such that ${\rm nlc}(X)$ is non-separable,
  then $\left(2^X, \tau_{w(d)}\right)$ contains a closed copy of
  ${\mathbb N}^{\omega_1}$.
  \end{theorem}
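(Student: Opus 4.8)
The plan is to construct a closed topological embedding of $\mathbb{N}^{\omega_1}$ into $(2^X,\tau_{w(d)})$ directly, using the points of ${\rm nlc}(X)$ to create $\omega_1$ independent ``coordinate slots,'' each carrying a copy of the discrete space $\mathbb{N}$. First I would fix the index set: since ${\rm nlc}(X)$, as a subspace of a metric space, is non-separable, there are an $\varepsilon>0$ and an uncountable $3\varepsilon$-separated set, and after discarding all but $\omega_1$ of its points I get $\{x_\alpha:\alpha<\omega_1\}\subseteq{\rm nlc}(X)$ with $d(x_\alpha,x_\beta)\ge 3\varepsilon$ for $\alpha\ne\beta$. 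A one-line computation shows that any ball of radius $\varepsilon/2$ meets at most one of the balls $B(x_\alpha,\varepsilon)$, so $\{B(x_\alpha,\varepsilon):\alpha<\omega_1\}$ is a discrete family of open sets. Each $x_\alpha$ is non-isolated (an isolated point would be its own compact neighbourhood), so inside $B(x_\alpha,\varepsilon)$ I can choose distinct points $y_{\alpha,n}\to x_\alpha$; the non-compactness of these balls is the extra freedom I will exploit when the geometry of the sequences has to be controlled.

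I would then define $\Phi\colon\mathbb{N}^{\omega_1}\to 2^X$ by $\Phi(f)=\{x_\alpha:\alpha<\omega_1\}\cup\{y_{\alpha,f(\alpha)}:\alpha<\omega_1\}$. Because the family $\{B(x_\alpha,\varepsilon)\}$ is discrete and $\Phi(f)$ meets each ball in just two points, $\Phi(f)$ has no accumulation point and so is a genuine member of $2^X$. The coding is read off by the functionals $d(y_{\alpha,m},\cdot)$: one checks that $d(y_{\alpha,m},\Phi(f))=0$ when $f(\alpha)=m$, while if $f(\alpha)\ne m$ the value is bounded below by a positive constant depending only on $\alpha$ and $m$ (using $y_{\alpha,m}\ne x_\alpha$ and that $y_{\alpha,m}$ is isolated in the sequence, so $\inf_{m'\ne m}d(y_{\alpha,m},y_{\alpha,m'})>0$). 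Since Wijsman convergence forces $d(y_{\alpha,m},\Phi(f_\lambda))\to d(y_{\alpha,m},\Phi(f))$, this positive gap makes every coordinate stabilise along a convergent net, yielding at once that $\Phi$ is injective and that $\Phi^{-1}$ is continuous on the image.

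The crux is the continuity of $\Phi$ itself, i.e. that $f\mapsto d(x,\Phi(f))$ is continuous for each $x\in X$. Putting $C_x=d(x,\{x_\alpha:\alpha<\omega_1\})$, which is a constant, one has $d(x,\Phi(f))=\min\bigl(C_x,\inf_\alpha d(x,y_{\alpha,f(\alpha)})\bigr)$, and a coordinate $\alpha$ can influence this value only when some $y_{\alpha,n}$ is strictly closer to $x$ than $x_\alpha$ is; that is, only for $\alpha$ with $x\in B_\alpha$, where $B_\alpha=\{x:\ d(x,y_{\alpha,n})<d(x,x_\alpha)\text{ for some }n\}$. Thus $d(x,\Phi(\cdot))$ depends only on the coordinates in $F_x=\{\alpha:x\in B_\alpha\}$, and $\Phi$ is continuous exactly when the open sets $B_\alpha$ form a point-finite family, since $\mathbb{N}^{F}$ is discrete for finite $F$. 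This is the genuine obstacle: because a metric space is a countable union of bounded sets, the uncountably many $x_\alpha$ must cluster near certain points, so one cannot make the $B_\alpha$ pairwise disjoint. The hard part is therefore to choose the convergent sequences $y_{\alpha,n}$ so that each $B_\alpha$ points in a direction transverse to the cluster and $\{B_\alpha\}$ is point-finite. In a Hilbert space the model choice is to take $y_{\alpha,n}$ radially outward, which makes $B_\alpha$ the half-space on which the $\alpha$-th coordinate exceeds $1$, automatically point-finite by square-summability; in the general case I would arrange the analogous transversality using the room supplied by the non-compactness of the balls $B(x_\alpha,\varepsilon)$, and proving point-finiteness of $\{B_\alpha\}$ is where the main effort goes.

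Finally I would check that $\Phi(\mathbb{N}^{\omega_1})$ is closed in $2^X$: by discreteness of $\{B(x_\alpha,\varepsilon)\}$ and the rigidity of the coding, any set in the Wijsman closure of the image must meet each ball in exactly one pair $\{x_\alpha,y_{\alpha,g(\alpha)}\}$ and contain nothing outside the balls, hence equal $\Phi(g)$ for some $g\in\mathbb{N}^{\omega_1}$. Combined with injectivity, continuity of $\Phi$, and continuity of $\Phi^{-1}$, this exhibits $\Phi$ as a closed embedding, so $\bigl(2^X,\tau_{w(d)}\bigr)$ contains a closed copy of $\mathbb{N}^{\omega_1}$, as required.
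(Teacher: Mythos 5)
First, a point of comparison: the paper does not prove this statement at all --- Theorem~\ref{thm:chaber} is quoted from Chaber and Pol \cite{chaber-pol:02} without proof --- so your attempt can only be judged on its own merits, and it has two genuine gaps, one of which is an outright failure of the construction as stated. The fatal problem is closedness of the image. You choose the coding points as convergent sequences $y_{\alpha,n}\to x_\alpha$, and this makes the image of $\Phi$ provably non-closed, contrary to your final paragraph. Indeed, one may choose the sequences with $d(x_\alpha,y_{\alpha,n})\le 2^{-n}$ (in your own Hilbert-space model $y_{\alpha,n}=(1+t_n)e_\alpha$ with $t_n\downarrow 0$, this uniformity is automatic, since $d(x_\alpha,y_{\alpha,n})=t_n$ does not depend on $\alpha$). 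Now take the diagonal sequence $f_n\equiv n$. For every $x\in X$ one has $C_x-2^{-n}\le d(x,\Phi(f_n))\le C_x$, where $C_x=d(x,\{x_\alpha:\alpha<\omega_1\})$, so $\Phi(f_n)$ Wijsman-converges to the closed set $E=\{x_\alpha:\alpha<\omega_1\}$, which is not $\Phi(g)$ for any $g$ (every $\Phi(g)$ properly contains $E$). Hence $E$ lies in the closure of the image but not in it, and your claim that every set in the closure must meet each ball in exactly one pair $\{x_\alpha,y_{\alpha,g(\alpha)}\}$ is false: the coding points can collapse onto the base points along a net. The root cause is that you never use the defining property of ${\rm nlc}(X)$ --- your construction only needs the $x_\alpha$ to be non-isolated, which is far too weak (if $X$ is a disjoint union of $\omega_1$ convergent sequences at mutual distance $1$, then ${\rm nlc}(X)=\emptyset$, yet your construction runs verbatim; consistently, its image fails to be closed by the same diagonal argument). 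To block such escape limits, each fiber $\{y_{\alpha,n}:n\in\mathbb{N}\}$ must be an infinite \emph{uniformly separated} set inside a small neighborhood of $x_\alpha$, and it is exactly the non-compactness of all neighborhoods of $x_\alpha$ that is needed to supply these --- and even that extraction is delicate, since a non-compact ball in a metric space can be totally bounded (every ball of $\mathbb{Q}$ is), which is part of why the Chaber--Pol theorem is not a short argument.

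The second gap is the one you yourself flag: the continuity of $\Phi$, reduced to point-finiteness of the family $\{B_\alpha:\alpha<\omega_1\}$, is actually established only in the Hilbert-space model, where ``radially outward'' makes sense and square-summability of coordinates gives point-finiteness. In a general metric space there is no analogue of a radial direction, and the sentence ``I would arrange the analogous transversality using the room supplied by the non-compactness'' is a statement of intent, not a proof; since you correctly identify this as ``where the main effort goes,'' the central difficulty of the theorem is left unresolved. So the proposal is a plausible opening move plus a correct identification of the obstacle, but it is not a proof: the closedness step is wrong as written (and repairing it forces a structurally different choice of fibers, separated rather than convergent), and the key point-finiteness step is missing entirely.
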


  Since ${\mathbb N}^{\omega_1}$ is non-normal, as a corollary of
  Theorem \ref{thm:chaber}, if $(X,d)$ is a metric space such that
  ${\rm nlc}(X)$ is non-separable, then $\left(2^X, \tau_{w(d)}\right)$
  is non-normal. In particular, if $(X, \|\cdot\|)$ is a non-separable
  normed linear space and $d$ is the metric on $X$ induced by the
  norm $\|\cdot \|$, then $\left(2^X,\tau_{w(d)}\right)$ is non-normal.
  Therefore, Theorem \ref{thm:hola} can be viewed as a corollary of
  Theorem \ref{thm:chaber}.

  \section{The main result}
  \label{sec:hnormal}

  In this section, we shall prove the following main result of this paper,
  which can be treated as a partial answer to Problem \ref{prob:normal}
  and Problem \ref{prob:dimm}.

  \begin{theorem}\label{thm:hnormal}
  Let $(X,d)$ be a metric space. The following
  are equivalent.
  \begin{itemize}
  \item[(i)] $\left(2^X, \tau_{w(d)}\right)$ is metrizable.
  \item[(ii)] $\left(2^X\setminus \{X\}, \tau_{w(d)}\right)$
  is paracompact.
  \item[(iii)] $\left(2^X, \tau_{w(d)}\right)$  is hereditarily normal.
  \end{itemize}
  \end{theorem}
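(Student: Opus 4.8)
The plan is to prove the cycle by establishing the two easy implications $\mathrm{(i)}\Rightarrow\mathrm{(ii)}$ and $\mathrm{(i)}\Rightarrow\mathrm{(iii)}$ directly, and then the two substantive converses in contrapositive form: if $(X,d)$ is not separable, then neither $\mathrm{(ii)}$ nor $\mathrm{(iii)}$ holds. For the easy half, if $(2^X,\tau_{w(d)})$ is metrizable then every subspace is metrizable, hence hereditarily normal (giving $\mathrm{(iii)}$) and hereditarily paracompact, so in particular $(2^X\setminus\{X\},\tau_{w(d)})$ is paracompact (giving $\mathrm{(ii)}$). By Theorem~\ref{thm:wijmetric}, condition $\mathrm{(i)}$ is equivalent to separability of $(X,d)$, so the whole theorem reduces to the single assertion: \emph{if $(X,d)$ is non-separable, then $2^X\setminus\{X\}$ contains a closed subspace $C$ that is simultaneously non-paracompact and not hereditarily normal.} Such a $C$ refutes $\mathrm{(ii)}$, since a closed subspace of a paracompact space is paracompact, and refutes $\mathrm{(iii)}$, since $C$ (being a subspace of $2^X$ that is not hereditarily normal) contains a subspace that is non-normal in $2^X$.

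To build $C$ I would first extract a uniform discretization from non-separability: there are $\varepsilon>0$ and a set $D=\{x_\alpha:\alpha<\omega_1\}$ with $d(x_\alpha,x_\beta)\ge\varepsilon$ for $\alpha\ne\beta$, and we may clearly arrange $D\neq X$. Then $D$ is closed and discrete in $X$, so every subset of $D$ is closed in $X$, and for nonempty $S\subseteq\omega_1$ we may set $F_S=\{x_\alpha:\alpha\in S\}\in 2^X$. The key elementary computation is that for $x_\gamma\in D$ one has $d(x_\gamma,F_S)=0$ when $\gamma\in S$ and $d(x_\gamma,F_S)\ge\varepsilon$ otherwise; hence on $C:=\{F_S:\emptyset\neq S\subseteq\omega_1\}$ the function $d(x_\gamma,\cdot)$ takes only the values $0$ and those $\ge\varepsilon$, so each coordinate set $\{F_S:\gamma\in S\}$ is clopen and the assignment $\Phi(F_S)=(\chi_S(\gamma))_{\gamma<\omega_1}$ is a continuous bijection of $C$ onto $\{0,1\}^{\omega_1}\setminus\{\mathbf 0\}$. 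Moreover $C$ is closed in $2^X$: every $F_S\subseteq D$ gives $d(y,F_S)\ge d(y,D)$ for all $y$, so any Wijsman limit $A$ of a net in $C$ satisfies $d(y,A)\ge d(y,D)$, forcing $A\subseteq D$ and thus $A=F_{S'}$ with $S'=\{\gamma:x_\gamma\in A\}\neq\emptyset$. Finally $F_S\subseteq D\neq X$ yields $C\subseteq 2^X\setminus\{X\}$.

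The behaviour of $\mathrm{nlc}(X)$ then dictates how to finish, and the argument splits. If $\mathrm{nlc}(X)$ is non-separable, I would bypass $\Phi$ and invoke Theorem~\ref{thm:chaber}: $2^X$ contains a closed copy of $\N^{\omega_1}$, which is non-normal, hence both non-paracompact and not hereditarily normal, and which one checks lies in $2^X\setminus\{X\}$; this serves as $C$. If instead $\mathrm{nlc}(X)$ is separable while $X$ is non-separable, then the non-separability lives in the locally compact part, and I would choose $D$ there with pairwise-disjoint compact neighbourhoods, positioned relative to the separable set $\mathrm{nlc}(X)$ so that every bounded ball meets $D$ in a finite set. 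Under such a choice the off-support distances $\inf_{\alpha\in S}d(y,x_\alpha)$ depend only on finitely many coordinates near each test point, which upgrades $\Phi$ to a homeomorphism. Then $C\cong\{0,1\}^{\omega_1}\setminus\{\mathbf 0\}$ is a compact cube with one non-$G_\delta$ point removed; this is non-paracompact (a locally compact paracompact space is a free sum of $\sigma$-compact clopen pieces, which would make the removed point a $G_\delta$), and it is not hereditarily normal, since $\N$ embeds as a relatively discrete subspace of $\{0,1\}^{\omega}$ and hence $\N^{\omega_1}$, which is non-normal, embeds into $\{0,1\}^{\omega_1}$ avoiding $\mathbf 0$.

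The main obstacle is exactly the verification that $\Phi$ is a homeomorphism in the locally compact case, equivalently that the subspace Wijsman topology on $C$ is not strictly finer than the product topology. The difficulty is that for $\gamma\notin S$, and for test points $y\notin D$, the value $d(y,F_S)=\inf_{\alpha\in S}d(y,x_\alpha)$ is only known to be bounded below and can be lowered by enlarging $S$ at coordinates outside any prescribed finite set, so a priori the neighbourhood filter of $F_S$ depends on infinitely many coordinates. Controlling this requires that, for each of finitely many test points $y$, only finitely many $x_\alpha$ lie within the relevant distance of $y$; securing this is precisely where local compactness and the placement of $D$ away from the separable obstruction $\mathrm{nlc}(X)$ enter, and it is also why the genuinely non-locally-compact part of the problem must be routed through Theorem~\ref{thm:chaber}. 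I expect essentially all of the work, and the only delicate estimates, to be concentrated in this distance-control step together with the clean statement that deleting a non-$G_\delta$ point from a compact Hausdorff space destroys paracompactness.
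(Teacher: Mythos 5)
Your reduction of the theorem to the single claim ``$(X,d)$ non-separable $\Rightarrow$ $2^X\setminus\{X\}$ contains a closed, non-paracompact, non-hereditarily-normal subspace'' is exactly the paper's architecture, and several of your verifications are sound: the family $C=\{F_S:\emptyset\neq S\subseteq\omega_1\}$ is indeed closed in $\left(2^X,\tau_{w(d)}\right)$ and the coordinate map $\Phi$ is a continuous bijection onto $\{0,1\}^{\omega_1}\setminus\{\mathbf 0\}$; the case ${\rm nlc}(X)$ non-separable is fine via Theorem~\ref{thm:chaber} (modulo the routine detail of keeping $X$ out of the copy). The genuine gap is your second case. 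The auxiliary hypothesis you want to impose on $D$ --- that every bounded ball meets $D$ in a finite set --- is simply not achievable in general when ${\rm nlc}(X)$ is separable: in the metric hedgehog with $\omega_1$ spines of length $1$ (where ${\rm nlc}(X)$ is a single point), the whole space is bounded, so every uncountable $\varepsilon$-discrete set lies inside one ball. Worse, the intended conclusion of your Case 2 can fail for \emph{every} choice of $D$: take $X=\{y\}\cup\{x_\alpha:\alpha<\omega_1\}$ with $d(x_\alpha,x_\beta)=1$ and $d(y,x_\alpha)=r_\alpha$, the $r_\alpha$ pairwise distinct in $(1,2)$. This $X$ is uniformly discrete, so ${\rm nlc}(X)=\emptyset$, yet for any uncountable $A\subseteq\omega_1$ the set $r(A)$ contains a two-sided condensation point $r_{\alpha^\ast}$ of itself; putting $S=\{\alpha^\ast\}$, the Wijsman-open set $\{F_T: d(y,F_T)>r_{\alpha^\ast}-\epsilon\}$ (for suitable $\epsilon$) contains $F_S$ but contains no product-basic neighbourhood of $F_S$, since outside any finite set of pinned coordinates there are uncountably many $\beta$ with $r_\beta<r_{\alpha^\ast}-\epsilon$, and adjoining one such $\beta$ to $S$ drops $d(y,F_{S\cup\{\beta\}})$ below the threshold. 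So $\Phi$ is never open here, and your case split leaves precisely such ``metrically crowded'' locally compact spaces uncovered.

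The paper avoids this obstruction by abandoning the power-set family altogether: Proposition~\ref{prop:subspace} embeds $\omega_1\times(\omega_1+1)$ as a closed subspace of $2^X\setminus\{X\}$ via $\varphi(\alpha,\beta)=F_\alpha\cup S_\beta$, where $S_\beta$ is an initial segment of the discrete set but $F_\alpha=X\setminus\bigl(\bigcup_{\gamma\geq\alpha}G_\gamma\bigr)$ is a \emph{co-small} set, the complement of a union of $\varepsilon/4$-balls around tails of the $D_\gamma$'s. Because the families are continuously increasing and each hyperspace point is a union of one ``big'' and one ``small'' piece, continuity follows from a directed-net lemma (Lemma~\ref{lem:cluster}), openness follows from testing just three points of $D$, and closedness in $2^X\setminus\{X\}$ follows because any $K\neq X$ misses some tail of the $G_\gamma$'s. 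Since $\omega_1\times(\omega_1+1)$ is non-normal and non-paracompact, this single construction kills (ii) and (iii) simultaneously, with no case distinction on ${\rm nlc}(X)$. Your $\Phi$-based picture is not wrong so much as too demanding: as the paper's closing remark shows, $C\cong\{0,1\}^{\omega_1}\setminus\{\mathbf 0\}$ does hold for the $0$--$1$ metric, but carrying the full product combinatorics of $\{0,1\}^{\omega_1}$ in the Wijsman topology requires distance functions to $D$ that are essentially two-valued, which a general metric need not supply; to repair your proof you would need to replace Case 2 by something like the paper's Proposition~\ref{prop:subspace}.
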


  To prove the above theorem, we use the embedding techniques,
  similar to those used by Keesling in \cite{keesling:70a}. In what
  follows, the ordinals $\omega_1$ and $\omega_1+1$ are viewed as
  topological spaces equipped with the order topology.

  \begin{proposition}\label{prop:normal}
  Let $(X,d)$ be a non-separable metric space. Then for any $n\ge 1$,
  the Wijsman hyperspace $\left(2^X, \tau_{w(d)}\right)$ contains a
  copy of $(\omega_1+1)^n$.
  \end{proposition}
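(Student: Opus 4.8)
The plan is to construct an explicit topological embedding of the compact space $(\omega_1+1)^n$ into $\left(2^X,\tau_{w(d)}\right)$. Since $(\omega_1+1)^n$ is compact and every Wijsman hyperspace is Tychonoff, hence Hausdorff, it will suffice to produce a \emph{continuous injection} $\Phi\colon(\omega_1+1)^n\to 2^X$: a continuous injection from a compact space into a Hausdorff space is automatically a homeomorphism onto its image. To feed the construction, I would first exploit non-separability to extract a uniformly discrete family of the right size. Because $(X,d)$ is non-separable, a maximal $\tfrac1m$-separated subset of $X$ must be uncountable for some $m\in\N$ (otherwise the union of these maximal sets over all $m$ would be a countable dense subset of $X$); fixing $\varepsilon=\tfrac1m$ yields an uncountable set whose points are pairwise at distance $\ge\varepsilon$. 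Inside it I would choose $n$ pairwise disjoint subsets of cardinality $\aleph_1$, enumerate them as $\{x^{(i)}_\alpha:\alpha<\omega_1\}$ for $1\le i\le n$, and fix one further point $p$ distinct from all the $x^{(i)}_\alpha$. The whole family $\{x^{(i)}_\alpha\}$ is then uniformly discrete, so every one of its subsets is closed in $X$.

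Next I would define, for $\bar\alpha=(\alpha_1,\dots,\alpha_n)\in(\omega_1+1)^n$,
\[
\Phi(\bar\alpha)=\{p\}\cup\bigcup_{i=1}^n\{x^{(i)}_\beta:\beta<\alpha_i\}.
\]
Each $\Phi(\bar\alpha)$ contains $p$ and is the union of $\{p\}$ with a subset of a uniformly discrete set, hence is a nonempty closed subset of $X$; thus $\Phi$ indeed maps into $2^X$. Injectivity is immediate: from the set $\Phi(\bar\alpha)$ one recovers, for each $i$, the initial segment $\{\beta:x^{(i)}_\beta\in\Phi(\bar\alpha)\}=[0,\alpha_i)$, and therefore each coordinate $\alpha_i$.

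The crux is continuity, which I would reduce to the defining distance functionals. Since $\tau_{w(d)}$ is the weak topology generated by the maps $d(x,\cdot)$, the map $\Phi$ is continuous if and only if $\bar\alpha\mapsto d(x,\Phi(\bar\alpha))$ is continuous on $(\omega_1+1)^n$ for every $x\in X$. Writing $g^x_i(\alpha)=\inf\{d(x,x^{(i)}_\beta):\beta<\alpha\}$ with the convention $g^x_i(0)=+\infty$, one has
\[
d(x,\Phi(\bar\alpha))=\min\Big(d(x,p),\,\min_{1\le i\le n}g^x_i(\alpha_i)\Big),
\]
a minimum of finitely many functions, each depending on a single coordinate. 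It therefore suffices to verify that each $g^x_i\colon\omega_1+1\to[0,\infty]$ is continuous, and here only the limit ordinals (including $\omega_1$) require attention, since $0$ and the successor ordinals are isolated. At a limit ordinal $\lambda$ the function $g^x_i$ is non-increasing, and given $\eta>0$ the definition of the infimum supplies a single $\beta_0<\lambda$ with $d(x,x^{(i)}_{\beta_0})<g^x_i(\lambda)+\eta$; then $g^x_i(\alpha)\le d(x,x^{(i)}_{\beta_0})<g^x_i(\lambda)+\eta$ for every $\alpha$ with $\beta_0<\alpha\le\lambda$, which together with monotonicity gives $g^x_i(\alpha)\to g^x_i(\lambda)$.

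I expect this convergence step at the limit ordinals to be the only genuinely delicate point in the argument; everything else (closedness, non-emptiness, injectivity, and the reduction of continuity to the functions $g^x_i$) is routine. Once the limit computation is in place, combining continuity and injectivity with the compactness of $(\omega_1+1)^n$ and the Hausdorffness of $\left(2^X,\tau_{w(d)}\right)$ yields at once that $\Phi$ is a homeomorphism onto its image, so $\left(2^X,\tau_{w(d)}\right)$ contains a copy of $(\omega_1+1)^n$, as required.
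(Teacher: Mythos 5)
Your proposal is correct and follows essentially the same route as the paper: the same map $\bar\alpha\mapsto\{p\}\cup\bigcup_{i=1}^n\{x^{(i)}_\beta:\beta<\alpha_i\}$ built from initial segments of an uncountable $\varepsilon$-discrete set together with an anchor point (the paper's $D_0$), concluded in the same way via compactness of $(\omega_1+1)^n$ and Hausdorffness of the Wijsman hyperspace. The only cosmetic difference is that you verify continuity through the coordinatewise monotone functions $g^x_i$ and the weak-topology characterization, whereas the paper checks preimages of the two kinds of subbasic Wijsman-open sets directly; both verifications are routine and equivalent.
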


  \begin{proof}
  Let $Y_n=(\omega_1+1)^n$. Since $(X,d)$ is non-separable, there
  exist $\varepsilon>0$ and a set $D\subseteq X$ with $|D| =
  \aleph_1$ which is $\varepsilon$-discrete, that is,
  $d(x, y)\ge \varepsilon$ for all distinct $x, y \in D$.
  Let $n \ge 1$. We express $D$ as the disjoint
  union $D=\bigcup_{i=0}^n D_i$ such that $D_0 =\{d\}$
  and $|D_i|=\aleph_1$ for all $1\le i \le n$. Next, for
  $1\le i \le n$, we enumerate $D_i$ as $D_i=\{x_\alpha^i:
  \alpha <\omega_1\}$, and for each $\alpha \le
  \omega_1$, we put $L_\alpha^i = \{x_\lambda^i \in D_i:
  \lambda <\alpha\}$. Obviously, each $L_\alpha^i$ is closed
  in $X$. Define a mapping
  $\varphi: Y_n \to \left(2^X,\tau_{w(d)}\right)$ by the formula
  $\varphi(\alpha_i)= D_0 \cup\bigcup_{i=1}^n
  L_{\alpha_i}^i$. 
  It is clear that $\varphi$ is one-to-one.

  \medskip
  To see that $\varphi$ is continuous, suppose
  $\varphi(\alpha_i) \cap V \ne \emptyset$
  for some open set $V\subseteq X$. If $D_0 \cap V\ne
  \emptyset$, there is nothing to verify. So, we assume
  that $L_{\alpha_i}^i \cap V\ne \emptyset$ for some $1\le
  i\le n$. Hence, there is some non-limit ordinal $\lambda_i
  <\alpha_i$ such that $L_{\lambda_i}^i \cap V \ne \emptyset$.
  It follows that for any neighborhood $N(\alpha_j)$ of
  $\alpha_j$ with $j\ne i$, we have
  \[
  \varphi\left(\prod_{j<i} N(\alpha_j) \times
  (\lambda_i, \alpha_i] \times \prod_{j>i} N(\alpha_j)
  \right) \subseteq V^-.
  \]
  On the other hand, if
  $d(x,\varphi(\alpha_i))>r$ for some
  $x\in X$ and $r>0$, then for any $\lambda_i \le \alpha_i$
  we have $d(x,\varphi(\lambda_i)) >r$.
  Thus, we have verified that $\varphi$ is continuous at
  any point $(\alpha_i) \in Y_n$.

  \medskip
  Since $Y_n$ is compact, the continuous one-to-one mapping
  $\varphi$ is an embedding.
  \end{proof}

  \begin{corollary}
  Let $(X,d)$ be a metric space. Then for the space
  $\left(2^X, \tau_{w(d)}\right)$, metrizability is equivalent
  to each one of the following properties: Fr\' echetness,
  sequentiality, countable tightness.
  \end{corollary}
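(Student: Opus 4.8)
The plan is to observe that, among the four listed properties, three of the implications hold in every topological space and need no special structure of the hyperspace. Indeed, metrizability implies first countability and hence the Fr\'echet--Urysohn property; the Fr\'echet--Urysohn property implies sequentiality; and sequentiality implies countable tightness. Consequently, to close the cycle of equivalences it suffices to establish the single remaining implication: that countable tightness of $\left(2^X,\tau_{w(d)}\right)$ forces metrizability.

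By Theorem \ref{thm:wijmetric}, metrizability of $\left(2^X,\tau_{w(d)}\right)$ is equivalent to separability of $(X,d)$, so I would argue by contraposition and show that a non-separable base space yields a hyperspace of uncountable tightness. Assuming $(X,d)$ is non-separable, I would apply Proposition \ref{prop:normal} with $n=1$ to obtain a topological copy of $\omega_1+1$ as a subspace of $\left(2^X,\tau_{w(d)}\right)$.

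It then remains to recall that $\omega_1+1$ itself has uncountable tightness: the point $\omega_1$ lies in the closure of the set $\omega_1$ of countable ordinals, but since $\omega_1$ is regular, every countable subset of $\omega_1$ is bounded, so its closure (taken in $\omega_1+1$) is contained in some $\beta+1$ with $\beta<\omega_1$ and thus omits $\omega_1$. Hence no countable subset of $\omega_1$ has $\omega_1$ in its closure, witnessing uncountable tightness at $\omega_1$. Since countable tightness is inherited by subspaces, any space containing a copy of $\omega_1+1$ must have uncountable tightness; therefore $\left(2^X,\tau_{w(d)}\right)$ fails to have countable tightness, which completes the contrapositive.

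The substance of the argument is carried entirely by the embedding in Proposition \ref{prop:normal}; once that copy of $\omega_1+1$ is in hand, the remaining steps are soft and standard, so I do not expect a genuine obstacle here. The only points deserving a word of care are the heredity of countable tightness under passage to subspaces and the elementary computation of the tightness of $\omega_1+1$, both of which are routine.
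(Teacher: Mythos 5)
Your proposal is correct and follows essentially the same route as the paper: the authors likewise reduce everything to the single implication from countable tightness to metrizability, invoke Proposition~\ref{prop:normal} (with $n=1$) to embed a copy of $\omega_1+1$ into the hyperspace of a non-separable base space, and conclude from the fact that $\omega_1+1$ is not countably tight. Your write-up merely makes explicit two details the paper leaves to the reader---the regularity argument showing $\omega_1+1$ has uncountable tightness at the point $\omega_1$, and the heredity of countable tightness---both of which are routine and correctly handled.
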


  \begin{proof}
  Proposition~\ref{prop:normal} shows, in particular, that if
  $(X,d)$ is non-separable, then $\left(2^X,\tau_{w(d)} \right)$
  contains a copy of $\omega_1+1$. Since $\omega_1+1$ does not
  have countable tightness, the conclusion follows.
  \end{proof}

  \begin{question}
  Let $(X,d)$ be a non-separable metric space. Can $\left(2^X,
  \tau_{w(d)} \right)$ contain a copy of $(\omega_1+1)^{\omega}$ or $(\omega_1+1)^{\omega_1}$?
  \end{question}

  For the proof of our next proposition, we need an
  auxiliary result.

  \begin{lemma}\label{lem:cluster}
  Let $(X,d)$ be a metric space. If ${\mathcal F}$ is a
  directed family in $2^X$ such that $H=\bigcup {\mathcal F}$
  is closed, then $H$ is a limit point of the net
  $(\mathcal F, \subseteq)$ in $\left(2^X, \tau_{w(d)}\right)$.
  \end{lemma}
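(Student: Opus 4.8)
The plan is to prove something slightly stronger than the statement: I would show that the inclusion net $(\mathcal{F},\subseteq)$ actually Wijsman \emph{converges} to $H$, and since a limit of a net is in particular a limit point, this is enough. The governing observation is that enlarging a set can only decrease distances. Concretely, if $F,F'\in\mathcal{F}$ with $F\subseteq F'$, then $d(x,F')\le d(x,F)$ for every $x\in X$; and since every $F\in\mathcal{F}$ satisfies $F\subseteq H$, we also have $d(x,F)\ge d(x,H)$. Thus, for each fixed $x$, the real-valued net $F\mapsto d(x,F)$ is monotone decreasing along the directed set $(\mathcal{F},\subseteq)$ and bounded below by $d(x,H)$.

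First I would identify the infimum of this net. Because $H=\bigcup\mathcal{F}$, every point of $H$ lies in some member of $\mathcal{F}$, so
\[
d(x,H)=\inf_{a\in H}d(x,a)=\inf_{F\in\mathcal{F}}\inf_{a\in F}d(x,a)=\inf_{F\in\mathcal{F}}d(x,F).
\]
A monotone decreasing net of reals converges to its infimum, so $d(x,F)\to d(x,H)$ for every $x\in X$. By the very definition of $\tau_{w(d)}$ in terms of continuity of the functionals $d(x,\cdot)$, this pointwise statement is precisely what it means for the net $(\mathcal{F},\subseteq)$ to converge to $H$, whence $H$ is a limit point.

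To make the convergence explicit at the level of the basic neighborhoods $\mathcal{N}_{H,E,\varepsilon}$, I would fix a finite set $E=\{x_1,\dots,x_m\}\subseteq X$ and $\varepsilon>0$. For each $i$ the infimum identity supplies some $F_i\in\mathcal{F}$ with $d(x_i,F_i)<d(x_i,H)+\varepsilon$. Using directedness, I then choose a single $F_0\in\mathcal{F}$ with $F_i\subseteq F_0$ for all $i\le m$. For every $F\in\mathcal{F}$ with $F_0\subseteq F$ and every $i$,
\[
d(x_i,H)\le d(x_i,F)\le d(x_i,F_0)\le d(x_i,F_i)<d(x_i,H)+\varepsilon,
\]
so $|d(x_i,F)-d(x_i,H)|<\varepsilon$, that is, $F\in\mathcal{N}_{H,E,\varepsilon}$. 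Hence the net is eventually in each basic neighborhood of $H$.

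The argument is essentially routine, and the only step I would flag is the passage from the pointwise infimum identity to a single cofinal tail that works simultaneously for all the finitely many test points of $E$. This is exactly what directedness of $\mathcal{F}$ provides, and it is the reason the hypothesis cannot be dispensed with: without it, $(\mathcal{F},\subseteq)$ need not even be a legitimate index set for a net.
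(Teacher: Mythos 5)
Your proof is correct and complete: the monotone decrease of $d(x,F)$ along $(\mathcal{F},\subseteq)$, the identity $d(x,H)=\inf_{F\in\mathcal{F}}d(x,F)$ coming from $H=\bigcup\mathcal{F}$, and the directedness-based tail argument together show the net Wijsman converges to $H$ (closedness of $H$ being needed only so that $H\in 2^X$). The paper omits its proof of Lemma~\ref{lem:cluster} as ``straightforward,'' and yours is precisely the routine argument the authors intended, so there is nothing to compare beyond noting that you have supplied the details faithfully.
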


  \begin{proof}
  The proof of this lemma is straightforward, and thus is
  omitted.
  \end{proof}

  In his proof of the equivalence of normality and compactness
  for Vietoris hyperspaces (under the CH), Keesling
  \cite{keesling:70a} established that, for a noncompact
  Tychonoff space $X$, $(2^X, \tau_V)$ contains a
  closed copy of the space $\omega_1\times (\omega_1+1)$.
  We are not able to obtain a similar embedding in the
  Wijsman hyperspace of a non-separable metric space $(X,d)$.
  However, we have the following result.

  \begin{proposition} \label{prop:subspace}
  Let $(X,d)$ be a non-separable metric space. Then the
  subspace $2^X\setminus \{X\}$ of $\left(2^X,\tau_{w(d)}\right)$
  contains a closed copy of the space $\omega_1\times (\omega_1+1)$.
  \end{proposition}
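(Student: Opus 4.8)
The plan is to realise the copy so that the only point deleted from $2^X$, namely $X$ itself, absorbs all the escape of the non-compact factor $\omega_1$. Thus the image $Z=\psi\big(\omega_1\times(\omega_1+1)\big)$ will be closed in $2^X\setminus\{X\}$ because every net in $Z$ whose first coordinate is cofinal in $\omega_1$ Wijsman-converges to $X$, whereas every other convergent net has its limit inside $Z$. This is exactly where the additive construction of Proposition~\ref{prop:normal} would fail: there the sets grow by \emph{adding} points of a fixed $\varepsilon$-discrete set, so an increasing net converges (by the computation behind Lemma~\ref{lem:cluster}) to the closure of the union, which is a \emph{proper} closed subset of $X$ and hence a spurious extra limit point lying in $2^X\setminus\{X\}$. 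To force the escaping limit to be $X$ I instead let the sets grow by filling in, taking complements of tails of a pairwise disjoint family of small balls.

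First I fix, using non-separability as in Proposition~\ref{prop:normal}, an $\varepsilon$-discrete set which I split into points $\{p_\gamma:\gamma<\omega_1\}$ and $\{c_{\gamma,\delta}:\gamma,\delta<\omega_1\}$, and I put $B_\gamma=S_d(\{p_\gamma\},\varepsilon/3)$ and $C_{\gamma,\delta}=S_d(\{c_{\gamma,\delta}\},\varepsilon/3)$; these balls are pairwise disjoint. For $(\alpha,\beta)\in\omega_1\times(\omega_1+1)$ I define
\[
\psi(\alpha,\beta)=X\setminus\Big(\bigcup_{\gamma\ge\alpha}B_\gamma\ \cup\ \bigcup_{\substack{\gamma\ge\alpha\\\delta\ge\beta}}C_{\gamma,\delta}\Big).
\]
The removed open set always contains $B_\alpha$, so $\psi(\alpha,\beta)\ne X$, and $\psi$ is increasing in both coordinates (larger $\alpha$ or $\beta$ removes fewer balls). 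Continuity then follows exactly as in Proposition~\ref{prop:normal}: for a subbasic neighbourhood $\{A:d(x,A)>r\}$ it is immediate from monotonicity and the fact that order-neighbourhoods of an ordinal consist of smaller ordinals, while for a subbasic $U^-$ one picks a witnessing ball already present at $(\alpha,\beta)$ and notes it stays present on a whole neighbourhood. Because the balls are disjoint, $d(p_\gamma,\psi(\alpha,\beta))\ge\varepsilon/3$ when $\gamma\ge\alpha$ and equals $0$ when $\gamma<\alpha$, and similarly for the $c_{\gamma,\delta}$; hence $\{A:d(p_\gamma,A)>\varepsilon/4\}$ and $\big(B_\gamma\big)^-$ detect the conditions $\alpha\le\gamma$ and $\alpha>\gamma$, while the analogous sets built from $c_{\alpha,\delta}$ detect $\beta$. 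Feeding these into an arbitrary basic neighbourhood $(\lambda,\alpha]\times(\mu,\beta]$ shows $\psi^{-1}$ is continuous, so $\psi$ is an embedding (and in particular one-to-one).

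The crux is closedness of $Z$ in $2^X\setminus\{X\}$, and here the ball construction pays off. Suppose a net $\psi(\alpha_s,\beta_s)$ converges in $2^X$ to some $C\ne X$. If some subnet had $\alpha_s\to\omega_1$, then since $\psi(\alpha_s,\beta_s)\supseteq\psi(\alpha_s,0)$ and the increasing family $\{\psi(\alpha,0):\alpha<\omega_1\}$ has union $X$ --- the relevant intersection of ball-tails being empty precisely because the balls are pairwise disjoint, so no ball survives cofinally --- we would get $d(x,\psi(\alpha_s,\beta_s))\le d(x,\psi(\alpha_s,0))\to d(x,X)=0$ for every $x$, forcing $C=X$, a contradiction. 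Hence the first coordinates are eventually bounded by some $\alpha^*<\omega_1$, so the net eventually lies in the compact set $[0,\alpha^*]\times(\omega_1+1)$; a subnet of $(\alpha_s,\beta_s)$ converges to some $(\alpha^\dagger,\beta^\dagger)$ in the domain, and continuity together with the Hausdorffness of $\tau_{w(d)}$ gives $C=\psi(\alpha^\dagger,\beta^\dagger)\in Z$. Thus $\overline{Z}^{\,2^X}\subseteq Z\cup\{X\}$, which is exactly the assertion that $Z$ is closed in $2^X\setminus\{X\}$. The main obstacle throughout is this final step, namely arranging that the \emph{only} new limit point created by the non-compact factor is the deleted point $X$; this is precisely why the sets must increase to all of $X$ rather than merely to some proper closed subset.
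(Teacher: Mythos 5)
Your construction is sound, and its central mechanism is in fact the same as the paper's: the paper's sets $F_\alpha=X\setminus\bigl(\bigcup_{\gamma\geq\alpha}G_\gamma\bigr)$ are precisely complements of tails of a pairwise disjoint family of $\varepsilon/4$-neighbourhoods of pieces of the discrete set, chosen exactly so that the family increases continuously up to $X$ and the only limit point created by the escape of the non-compact factor is the deleted point $X$. Where you genuinely differ is in two details. First, you apply the tail-complement device to \emph{both} coordinates (via the doubly indexed balls $C_{\gamma,\delta}$ with removal condition $\gamma\geq\alpha$ and $\delta\geq\beta$), whereas the paper keeps an additive family $S_\beta$ inside the discrete set for the compact factor $\omega_1+1$, where no escape needs controlling; both work, and your symmetric version is arguably cleaner. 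Second, you prove closedness by an eventually-bounded-or-cofinal dichotomy plus a subnet/compactness/Hausdorffness argument, where the paper argues directly with a single test point $y\notin K$, the neighbourhood ${\mathcal M}_{K,\{y\},r}$, and the compact piece ${\mathcal H}_0$; your route is more conceptual, the paper's more elementary and quantitative. Your coordinate-detection by the subbasic sets $(B_\gamma)^-$, $(C_{\gamma,\delta})^-$ and $\{A: d(\cdot,A)>\varepsilon/4\}$ is correct, since a removed ball forces distance at least $\varepsilon/3$ from its centre while a retained centre gives distance $0$; this also yields injectivity, so the apparent circularity in ``so $\psi$ is an embedding (and in particular one-to-one)'' is only cosmetic. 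I also checked that your family is continuously increasing in each coordinate, so the Lemma~\ref{lem:cluster}-type argument does give continuity at limit ordinals, and that your Case-A argument correctly yields $d(x,C)=0$ for every $x$ whenever the first coordinates are not eventually bounded.

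There is, however, one genuine (though easily repaired) defect: you never verify that $\psi(\alpha,\beta)$ is \emph{nonempty}, i.e.\ that $\psi$ maps into $2^X$ at all. At $(\alpha,\beta)=(0,0)$ the removed set is the union of \emph{all} the balls, and $X\setminus\bigcup(\mbox{all balls})$ can be empty --- for instance when $X$ is itself an $\varepsilon$-discrete set of cardinality $\aleph_1$ and your centres $p_\gamma$, $c_{\gamma,\delta}$ exhaust $X$; then $\psi(0,0)=\emptyset\notin 2^X$. (For every other pair nonemptiness is automatic: $p_0\in\psi(\alpha,\beta)$ when $\alpha>0$, and $c_{0,0}\in\psi(0,\beta)$ when $\beta>0$.) The fix is one line: reserve a point of the $\varepsilon$-discrete set that is the centre of no ball --- this is exactly the role of $D_0=\{d\}$ in Proposition~\ref{prop:normal}, and of the requirement in the paper's proof of Proposition~\ref{prop:subspace} that the discrete set $D$ be a \emph{proper} subset of $X$ --- or simply reindex so that the first coordinate runs over $[1,\omega_1)\cong\omega_1$, which does not disturb your closedness dichotomy. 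With that repair your proof is complete and correct.
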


  \begin{proof}
  Since $(X,d)$ is non-separable,
  there exist $\varepsilon>0$ and an $\varepsilon$-discrete
  proper subset $D=\{x_{\alpha,\beta}: \alpha<\omega_1
  \mbox{ and } \beta\leq\omega_1\}$ of $X$, with $x_{\alpha,
  \beta}\ne x_{\alpha',\beta'}$ for $(\alpha,\beta)\ne
  (\alpha',\beta')$.
  For every $\alpha<\omega_1$, let $D_\alpha=\{x_{\alpha,
  \beta}: \beta\leq\omega_1\}$ and $G_\alpha=S_d(D_\alpha,
  \frac{\varepsilon}{4})$. For every $\alpha<\omega_1$ and
  each $\beta\leq\omega_1$, let
  \[
  F_\alpha=X\setminus\left(\bigcup_{\gamma\geq\alpha}G_\gamma
  \right)
  \]
  and
  \[
  S_\beta=\{x_{\gamma,\delta}: \gamma<\omega_1 \mbox{ and }
  \delta<\beta\}.
  \]
  Note that the families ${\mathcal F}=\{F_\alpha: \alpha<
  \omega_1\}$ and $\mathcal S =\{S_\beta: \beta\leq\omega_1\}$
  are ``continuously increasing", in the sense that
  $F_\alpha=\bigcup \{ F_{\gamma+1}:\gamma<\alpha\}$ and
  $S_\beta=\bigcup \{ S_{\delta+1}: \delta<\beta\}$ for
  all $0<\alpha<\omega_1$ and $0<\beta\leq\omega_1$.
  We show that the subspace
  \[
  {\mathcal H}=\{F_\alpha\cup S_\beta: \alpha<\omega_1
  \mbox{ and }\beta\leq\omega_1\}
  \]
  of $(2^{X},\tau_{w(d)})$
  is homeomorphic to the product space $\omega_1\times
  (\omega_1+1)$. Define a mapping $\varphi: \omega_1\times
  (\omega_1+1)\to {\mathcal H}$ by the formula $\varphi
  (\alpha,\beta)=F_\alpha\cup S_\beta$, and note that
  $\varphi$ is one-to-one and onto.

  \medskip
  To show that $\varphi$ is continuous, let $A\subseteq
  \omega_1\times(\omega_1+1)$, and let
  $(\alpha,\beta)\in\overline{A}$. We show that
  $\varphi(\alpha,\beta)\in\overline{\varphi(A)}$.
  Let $A'=\{(\gamma,\delta)\in A: \gamma\leq\alpha\
  {\rm and}\ \delta\leq\beta\}$, and note that
  $(\alpha,\beta)\in\overline{A'}$. Note that, for all
  $(\gamma,\delta),(\gamma',\delta')\in A'$,
  there exists $(\mu,\nu)\in A'$ such that $\mu\geq
  \max (\gamma,\gamma')$ and
  $\nu\geq\max(\delta,\delta')$. As a consequence,
  the family $\{F_\gamma\cup S_\delta:
  (\gamma,\delta)\in A'\}$ is directed. Since
  $(\alpha,\beta)\in\overline{A'}$, we have for all
  $\alpha'<\alpha$ and $\beta'<\beta$ that there exists
  $(\gamma,\delta)\in A'$ such that
  $\gamma\geq\alpha'$ and $\delta\geq\beta'$. As $\mathcal F$
  and $\mathcal S$ are continuously increasing,
  it follows that
  \[
  \bigcup\{F_\gamma\cup S_\delta:
  (\gamma,\delta)\in A'\}=F_\alpha\cup S_\beta.
  \]
  From the foregoing it follows by Lemma \ref{lem:cluster}
  that the net $(\{F_\gamma\cup S_\delta:
  (\gamma,\delta)\in A'\},\subseteq)$ converges to
  $F_\alpha \cup S_\beta$ in $\tau_{w(d)}$.
  As a consequence, $\varphi(\alpha,\beta)\in
  \overline{\varphi(A')}\subseteq\overline{\varphi(A)}$.
  We have shown that $\varphi$ is continuous.

  \medskip
  Next, we show that $\varphi$ is open. Let $W$ be an open
  subset of $\omega_1\times(\omega_1+1)$. To show that
  $\varphi(W)$ is open in $\mathcal H$, let $(\alpha,\beta)
  \in W$. Denote by $J$ the element $\varphi(\alpha,\beta)=
  F_\alpha\cup S_\beta$ of the set $\varphi(W)$. There exist
  $\gamma<\alpha$ and $\delta<\beta$ such that $(\gamma,\alpha]
  \times (\delta,\beta] \subseteq W$. Let $E=\{x_{\alpha,
  \beta}, x_{\alpha,\delta}, x_{\gamma,\beta}\}$ and
  \[
  {\mathcal N}_{J,E,\varepsilon/2}=\left\{H\in\mathcal H:
  |d(x,H)-d(x,J)|
  <\frac{\varepsilon}{2} \mbox{ for every }x\in E\right\}.
  \]
  Note that ${\mathcal N}_{J,E,\varepsilon/2}$ is a neighborhood
  of $J$ in $\mathcal H$. We show that ${\mathcal N}_{J,E,
  \varepsilon/2} \subseteq\varphi(W)$.
  Let $H\in {\mathcal N}_{J,E,\varepsilon/2}$, and let
  $\mu<\omega_1$ and $\nu\leq
  \omega_1$ be such that $H=F_\mu \cup S_\nu$. To show that
  $H\in\varphi(W)$, we need to show that the inequalities
  $\gamma<\mu\leq\alpha$ and $\delta<\nu \leq\beta$ hold.
  For the element $x_{\alpha,\beta}$ of $E$, we have
  $x_{\alpha,\beta}\in G_\alpha\subseteq X\setminus F_\alpha$
  and $x_{\alpha,\beta}\not\in S_{\beta}$. It follows that
  $x_{\alpha,\beta}\not\in J$ and hence that
  $d(x_{\alpha,\beta},J)\geq\varepsilon$. As a consequence,
  \[
  d(x_{\alpha,\beta},H)\geq
  d(x_{\alpha,\beta},J)-|d(x_{\alpha,\beta},H)-d(x_{\alpha,
  \beta},J)|\geq \varepsilon-\frac{\varepsilon}{2}>0.
  \]
  By the foregoing, we have that $x_{\alpha,\beta}\not\in H$,
  and this means that $x_{\alpha,\beta}\not\in F_\mu$
  and $x_{\alpha,\beta}\not\in S_\nu$. It follows that we
  have $\mu\leq\alpha$ and $\nu\leq\beta$.
  For the element $x_{\alpha,\delta}$ of $E$, we have
  $x_{\alpha,\delta}\in S_{\beta}\subseteq J$, and hence
  $d(x_{\alpha,\delta},J)=0$. It follows that
  \[
  d(x_{\alpha,\delta},H)=|d(x_{\alpha,\delta},H)-
  d(x_{\alpha,\delta}, J)|<\frac{\varepsilon}{2}.
  \]
  Since $H\subseteq D$ and $x_{\alpha,\delta}\in D$, it
  follows further, by $\varepsilon$-discreteness of $D$,
  that $x_{\alpha,\delta}\in H$. Since $H=F_\mu\cup S_\nu$,
  we have either $x_{\alpha,\delta}\in F_\mu$ or $x_{\alpha,
  \delta}\in S_\nu$. In the first case, since $x_{\alpha,
  \delta}\in D_\alpha\subseteq G_\alpha$, we would have that
  $\alpha<\mu$; however, we showed above that $\mu\leq\alpha$.
  Hence we must have that $x_{\alpha,\delta}\in S_\nu$. It follows
  that $\delta<\nu$. We have shown that $\delta<\nu \leq\beta$.
  Similarly, for the element $x_{\gamma,\beta}$ of $E$, we
  have that $x_{\gamma,\beta}\in F_{\alpha}\subseteq J$, and hence
  that $d(x_{\gamma,\beta},J)=0$. It follows that $d(x_{\gamma,
  \beta},H)<\frac{\varepsilon}{2}$, and further, that
  $x_{\gamma,\beta}\in H$. As a consequence, we have either
  $x_{\gamma,\beta}\in F_\mu$ or $x_{\gamma,\beta}\in S_\nu$.
  In the second case we would have that $\beta<\nu$, but this does not
  hold, since we showed above that $\nu\leq\beta$. Hence we
  must have $x_{\gamma,\beta}\in F_\mu$, and it follows from this that
  $\gamma<\mu$. We have shown that $\gamma<\mu\leq\alpha$.
  This completes the proof of openness of $\varphi$.

  \medskip
  We have shown that the subspace $\mathcal H$ of $\left(2^{X},
  \tau_{w(d)}\right)$ is homeomorphic to the space $\omega_1
  \times(\omega_1+1)$. Note that $\bigcup\mathcal H=D$. As a
  consequence, $X\not\in\mathcal H$. To complete the proof,
  we show that $\mathcal H$ is closed in the subspace $2^{X}
  \setminus\{X\}$ of $\left(2^{X},\tau_{w(d)}\right)$. Let
  $K\in\overline{\mathcal H}\setminus\mathcal H$. To show
  that $K=X$, assume on the contrary that $X\setminus K\ne
  \emptyset$. Let $y\in X\setminus K$. There exists $\alpha_0
  <\omega_1$ such that $y\not\in\bigcup_{\gamma> \alpha_0}
  G_\gamma$. Note that $y\in F_\alpha$ for each $\alpha>
  \alpha_0$. The subset $\mathcal H_0=\{F_\alpha\cup S_\beta:
  \alpha\leq\alpha_0 \mbox{ and }\beta\leq\omega_1\}$ of
  $\mathcal H$ is compact, because it is homeomorphic to
  $[0,\alpha_0]\times [0,\omega_1]$. Since $K\in
  \overline{\mathcal H}\setminus\mathcal H$, it follows
  that $K\in\overline{\mathcal H\setminus\mathcal H_0}$. Let
  $r=d(y,K)$ and consider the neighborhood
  \[
  {\mathcal M}_{K,\{y\}, r}=\{H\in 2^{X}: |d(y,H)-d(y,K)|<r\}
  \]
  of $K$ in $\left(2^{X},
  \tau_{w(d)}\right)$. It follows from the foregoing, that
  there exist $\alpha>\alpha_0$ and $\beta\leq\omega_1$ such
  that $F_\alpha\cup S_\beta\in {\mathcal M}_{K,\{y\}, r}$.
  However, now we have that $y\in F_\alpha$ and hence $d(y,
  F_\alpha\cup S_\beta)=0$. Since $F_\alpha \cup S_\beta\in
  {\mathcal M}_{K,\{y\}, r}$, we have $d(y,K)<r\,$; this, however,
  contradicts with the definition of $r$. We have shown that
  $\overline{\mathcal H}\setminus\mathcal H\subseteq\{X\}$
  and hence that $\mathcal H$ is closed in $2^{X}\setminus
  \{X\}$.
  \end{proof}

  \begin{corollary}\label{coro:hnormal}
  Let $(X,d)$ be a metric space. The following
  are equivalent.
  \begin{itemize}
  \item[(i)] $\left(2^X, \tau_{w(d)}\right)$ is metrizable.
  \item[(ii)] $\left(2^X\setminus \{X\}, \tau_{w(d)}\right)$
  is metacompact.
  \item[(iii)] $\left(2^X\setminus \{X\}, \tau_{w(d)}\right)$ is
  meta-Lindel\"{o}f.
  \item[(iv)] $\left(2^X\setminus \{X\}, \tau_{w(d)}\right)$ is
  orthocompact.
  \end{itemize}
  \end{corollary}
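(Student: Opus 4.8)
The plan is to reduce everything to separability via Theorem~\ref{thm:wijmetric} and then to read off the three covering properties from the closed copies produced in the previous propositions. By Theorem~\ref{thm:wijmetric}, (i) holds exactly when $(X,d)$ is separable, so it suffices to show that each of (ii), (iii), (iv) is also equivalent to separability of $(X,d)$. The forward direction I would handle in one stroke: if $(X,d)$ is separable then $\left(2^X,\tau_{w(d)}\right)$ is metrizable, hence so is the subspace $2^X\setminus\{X\}$; being metrizable it is paracompact, and paracompactness implies metacompactness, which in turn implies both meta-Lindel\"ofness and orthocompactness, because a point-finite open family is automatically point-countable and interior-preserving. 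Thus (i) implies each of (ii)--(iv). All the work is in the converse, which I would prove contrapositively, assuming $(X,d)$ non-separable and showing that $2^X\setminus\{X\}$ fails all three properties. Since metacompactness, meta-Lindel\"ofness and orthocompactness are each inherited by closed subspaces, it is enough to locate suitable closed subspaces, and Proposition~\ref{prop:subspace} furnishes a closed copy $\mathcal H\cong\omega_1\times(\omega_1+1)$.

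For (ii) and (iii) I would work inside the closed subspace $\omega_1\times\{0\}$ of $\mathcal H$ (closed in $\mathcal H$, hence in $2^X\setminus\{X\}$), which is a copy of $\omega_1$. On $\omega_1$ take the open cover $\{[0,\beta]:\beta<\omega_1\}$; every open refinement consists of bounded sets. If such a refinement $\mathcal V$ were point-countable, then for each limit $\xi$ I would pick $V_\xi\in\mathcal V$ with $\xi\in V_\xi$ and a basic neighbourhood $(\gamma_\xi,\xi]\subseteq V_\xi$; pressing down the regressive map $\xi\mapsto\gamma_\xi$ (Fodor's lemma) gives a stationary set $S$ on which $\gamma_\xi$ equals a constant $\gamma^{*}$, so $\gamma^{*}+1\in V_\xi$ for all large $\xi\in S$. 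Point-countability then forces the $V_\xi$ to assume only countably many values, one of which contains uncountably many ordinals $\xi$ and is therefore unbounded --- contradicting boundedness. Hence this cover has no point-countable (a fortiori no point-finite) open refinement, so $\omega_1$ is neither meta-Lindel\"of nor metacompact, and closed heredity rules out (ii) and (iii) for $2^X\setminus\{X\}$.

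The crux is (iv), orthocompactness, and here the one-dimensional copy is useless: $\omega_1$ is orthocompact (indeed the cover above is already interior-preserving, since $\bigcap_{\beta\ge\gamma}[0,\beta]=[0,\gamma]$ is clopen), so I must use the genuinely two-dimensional structure of $\mathcal H$. On $\omega_1\times(\omega_1+1)$ consider the open cover consisting of $W=\omega_1\times[0,\omega_1)$ together with $U_\alpha=[0,\alpha]\times(\alpha,\omega_1]$ for $\alpha<\omega_1$; the top edge $\omega_1\times\{\omega_1\}$ meets only the $U_\alpha$, and $(\xi,\omega_1)\in U_\alpha$ forces $\alpha\ge\xi$. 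Suppose $\mathcal V$ is an interior-preserving open refinement. For each limit $\xi$ choose $V_\xi\in\mathcal V$ with $(\xi,\omega_1)\in V_\xi$; then $V_\xi\subseteq U_{\alpha(\xi)}$ with $\alpha(\xi)\ge\xi$, and $V_\xi$ contains a basic box $(\gamma_\xi,\xi]\times(\delta_\xi,\omega_1]$. Pressing down $\xi\mapsto\gamma_\xi$ produces a stationary $S$ and a constant value $\gamma^{*}$, so $p=(\gamma^{*}+1,\omega_1)\in V_\xi$ for every $\xi\in S$ with $\xi>\gamma^{*}$. Interior-preservation makes $I=\bigcap\{V\in\mathcal V:p\in V\}$ open, so $I\supseteq\{\gamma^{*}+1\}\times(\rho,\omega_1]$ for some $\rho<\omega_1$. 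Now fix $\eta$ with $\max\{\rho,\gamma^{*}\}<\eta<\omega_1$ and pick $\xi\in S$ with $\xi>\eta$: then $(\gamma^{*}+1,\eta)\in I\subseteq V_\xi\subseteq U_{\alpha(\xi)}$, which forces $\eta>\alpha(\xi)\ge\xi>\eta$, a contradiction. Hence $\mathcal H$, and therefore $2^X\setminus\{X\}$, is not orthocompact, so (iv) fails.

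I expect this last argument to be the main obstacle. The failures of metacompactness and meta-Lindel\"ofness rest only on the familiar fact that $\omega_1$ is countably compact but not compact, whereas orthocompactness is the weakest of the three and is satisfied by $\omega_1$ itself; defeating it requires designing the specific product cover and pinpointing the single point $p$ at which the interior-preservation hypothesis collapses under the pressing-down argument. Combining the two directions, each of (ii), (iii), (iv) is equivalent to separability of $(X,d)$ and hence, by Theorem~\ref{thm:wijmetric}, to (i); this yields the stated equivalence of (i)--(iv).
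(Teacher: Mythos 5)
Your proposal is correct, and its skeleton coincides with the paper's: reduce (i) to separability via Theorem~\ref{thm:wijmetric}, get the easy implications from metrizability, and for the converses use the closed copy of $\omega_1\times(\omega_1+1)$ in $2^X\setminus\{X\}$ supplied by Proposition~\ref{prop:subspace} together with closed-heredity of the three covering properties. The difference is in how the copy is exploited. The paper's proof is a three-line reduction: it declares that only (iv)~$\Rightarrow$~(i) needs proof and then kills orthocompactness by citing Scott's theorem \cite{scott:75} that $\omega_1\times(\omega_1+1)$ is not orthocompact. You instead reprove Scott's result from scratch with the cover $\{W\}\cup\{U_\alpha:\alpha<\omega_1\}$ and the pressing-down argument at the point $p=(\gamma^*+1,\omega_1)$ --- your argument is sound, and it is essentially the known proof of Scott's theorem --- and you additionally dispose of (ii) and (iii) separately by showing the closed copy of $\omega_1$ (the slice $\omega_1\times\{0\}$) is not meta-Lindel\"of. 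This separate treatment of (iii) is not mere redundancy: since a point-countable open family need not be interior-preserving, meta-Lindel\"ofness does not imply orthocompactness in general, so the paper's ``we only need to show (iv)~$\Rightarrow$~(i)'' tacitly leaves the implication (iii)~$\Rightarrow$~(i) to the reader (it does follow, e.g.\ because $\omega_1\times(\omega_1+1)$ is countably compact but not compact, hence not meta-Lindel\"of, or by your Fodor argument on the $\omega_1$-slice). So what the paper buys by citation is brevity; what you buy is a self-contained proof that also makes the logical structure among (ii)--(iv) fully explicit, including your correct observation that the $\omega_1$-slice alone cannot defeat orthocompactness (since $\omega_1$ is orthocompact) and that the two-dimensional structure of $\omega_1\times(\omega_1+1)$ is genuinely needed there.
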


  \begin{proof}
  We only need to show that (iv) $\Rightarrow$ (i). Assume
  that $(X,d)$ is not separable. By
  Proposition \ref{prop:subspace}, $\left(2^X
  \setminus \{X\}, \tau_{w(d)}\right)$ contains a closed copy
  of $\omega_1 \times (\omega_1+1)$. As $\left(2^X
  \setminus \{X\},\tau_{w(d)}\right)$ is orthocompact, then
  $\omega_1 \times (\omega_1+1)$ is orthocompact, which
  contradicts with a result of Scott in \cite{scott:75}.
  \end{proof}

  We now use Proposition \ref{prop:subspace} to prove Theorem
  \ref{thm:hnormal}.

  \begin{proof}[Proof of Theorem \ref{thm:hnormal}]
  We only need to prove that (iii) implies (i). Assume that
  (iii) holds, but $\left(2^X, \tau_{w(d)}\right)$ is not
  metrizable. By Theorem \ref{thm:wijmetric}, $(X,d)$
  is not separable. By Proposition ~\ref{prop:subspace},
  $\left(2^X\setminus \{X\}, \tau_{w(d)}\right)$ contains a
  closed copy of $\omega_1\times (\omega_1+1)$. Since
  $\omega_1\times (\omega_1+1)$ is not normal, (iii) does
  not hold. This is a contradiction.
  \end{proof}

  We conclude this paper with the following open question.

  \begin{question}
  Let $(X,d)$ be a metric space. If $\left(2^X,\tau_{w(d)}\right)$
  is non-normal, does $\left(2^X,\tau_{w(d)}\right)$ contain a
  closed copy of $\omega_1 \times (\omega_1+1)$?
  \end{question}

  Note that there exists a metric space $(X,d)$ such that $\left(2^X,\tau_{w(d)}\right)$ is non-normal, but $\left(2^X,
  \tau_{w(d)}\right)$ contains no closed copy of ${\mathbb
  N}^{\omega_1}$. Indeed, take any set $X$ with $|X|=\omega_1$
  and equip $X$ with the 0-1 metric $d$. By Remark 3.1 of
  \cite{chaber-pol:02}, $\left(2^X,\tau_{w(d)}\right)$ is
  homeomorphic to $\{0,1\}^{\omega_1} \setminus \{\bf 0\}$,
  which is locally compact. Thus, $\left(2^X,\tau_{w(d)}\right)$
  contains no closed copy of ${\mathbb N}^{\omega_1}$.


\begin{thebibliography}{10}

  \bibitem{beer:91}
  G. Beer, \emph{A Polish topology for the closed subsets
  of a Polish space}, Proc. Amer. Math. Soc. {\bf 113} (1991),
  1123--1133.

  \bibitem{beer:93}
  G. Beer, \emph{Topologies on closed and closed convex sets},
  Kluwer, Dordrecht, 1993.

  \bibitem{burke:84}
  D.~K. Burke, \emph{Covering properties}, in Handbook of
  set-theoretic topology, North-Holland, Amsterdam,
  1984, pp. 347–422.

  \bibitem{cao-junnila:10}
  J. Cao and H.~J.~K. Junnila, \emph{Amsterdam properties of
  Wijsman hyperspaces}, Proc. Amer. Math. Soc. {\bf 138}
  (2010), 769--776.

  \bibitem{cao-junnila-moors:12}
  J. Cao, H.~J.~K. Junnila and W.~B. Moors, \emph{Wijsman
  hyperspaces: subspaces and embeddings}, Topology Appl.
  {\bf 159} (2012), 1620--1624.

  \bibitem{cao-tomita:10}
  J. Cao and A.~H. Tomita, \emph{The Wijsman hyperspace of a
  metric hereditarily Baire space is Baire}, Topology Appl.
  {\bf 157} (2010), 145--151.

  \bibitem{chaber-pol:02}
  J. Chaber and R. Pol, \emph{Note on the Wijsman hyperspaces
  of completely metrizable spaces}, Boll. U. M. I. {\bf 85-B}
  (2002), 827--832.

  \bibitem{costantini:95}
  C. Costantini, \emph{Every Wijsman topology relative to a
  Polish space is Polish}, Proc. Amer. Math. Soc. {\bf 123}
  (1995), 2569--2574.

  \bibitem{diMM:98}
  G. Di Maio and E. Meccariello, \emph{Wjisman topology},
  Recent Progress in Function spaces, pp.55--91, Quad. Mat.
  {\bf 3}, Dept. Math., Seconda Univ. Napoli, Caserta, 1998.

  \bibitem{en:89} {R. Engelking}, {\em General topology\/},
  Revised and completed edition, Heldermann Verlag, Berlin,
  1989.

  \bibitem{flach:64}
  J. Flachsmeyer, \emph{Versschiedene topologisierungen im
  raum der abgeschlossenen teilmengen}, Math. Nachr. {\bf 26}
  (1964), 321--337.

  \bibitem{hola:99}
  L. Hol\'a, S. Levi and J. Pelant, \emph{Normality and
  paracompactness of the Fell topology}, Proc. Amer. Math.
  Soc. {\bf 127} (1999), 2193--2197.

  \bibitem{hola-novotny:2012}
  L. Hol\'a and B. Novotn\'{y}, \emph{On normality of the Wijsman
  topology}, Annali di Matematica, forthcoimg.
  DOI 10.1007/s10231-011-0227-9

  \bibitem{ivanova:55}
  V. M. Ivanova, \emph{On the theory of the spaces of subsets},
  Dokl. Akad. Nauk SSSR {\bf 101} (1955), 601--603.

  \bibitem{keesling:70a}
  J. Keesling, \emph{On the equivalence of normality and
  compactness in hyperspaces}, Pacific J. Math. {\bf 33}
  (1970), 657--667.

  \bibitem{keesling:70b}
  J. Keesling, \emph{Normality and properties related to
  compactness in hyperspaces}, Proc. Amer. Math. Soc.
  {\bf 24} (1970), 760--766.

  \bibitem{lechicki-levi:87}
  A. Lechicki and S. Levi, \emph{Wijsman convergence in the
  hyperspace of a metric space}, Boll. Un. Mat. Ital. (7)
  {\bf 1-B} (1987), 439--452.

  \bibitem{scott:75}
  B. M. Scott, \emph{Toward a product theory for
  orthocompactness}, Studies in topology, pp. 517--537,
  Academic Press, New York, 1975.

  \bibitem{velicko:75}
  N.~V. Velichko, \emph{On spaces of closed subsets},
  Siberian Math. J. {\bf 16} (1975), 484--486.

  \bibitem{wijsman:66}
  R. Wijsman, \emph{Convergence of sequences of convex
  sets, cones and functions II}, Trans. Amer. Math. Soc.
  {\bf 123} (1966), 32--45.

  \bibitem{zsilinszky:07}
  L. Zsilinszky, \emph{On Baireness of the Wijsman
  hyperspace}, Bollettino U.M.I. {\bf (8) 10-B} (2007), 1071--1079.
  \end{thebibliography}
   \end{document}